\theoremstyle{plain} 
\newtheorem{theorem}{\indent\sc Theorem}[section]
\newtheorem{lemma}[theorem]{\indent\sc Lemma}
\newtheorem{corollary}[theorem]{\indent\sc Corollary}
\newtheorem{proposition}[theorem]{\indent\sc Proposition}
\theoremstyle{definition} 
\newtheorem{remark}[theorem]{\indent\sc Remark}
\newcommand\on{\operatorname}
\renewcommand\div{\on{div}}
\newcommand\Ric{\on{Ric}}
\newcommand\trace{\on{trace}}
\title{On trivial gradient hyperbolic Ricci and \\
gradient hyperbolic Yamabe solitons}
\author{Adara M. Blaga}
\date{}
\begin{document}

\maketitle

\markboth{{\small\it {\hspace{0.1cm} On trivial gradient hyperbolic Ricci and gradient hyperbolic Yamabe solitons}}}{\small\it{On trivial gradient hyperbolic Ricci and gradient hyperbolic Yamabe solitons \hspace{0.1cm}}}

\footnote{2020 \textit{Mathematics Subject Classification}.
37K40; 53C21; 53C25; 53C50; 53Z05.}
\footnote{\textit{Key words and phrases}.
Hyperbolic Ricci soliton, hyperbolic Yamabe soliton, gradient vector field, scalar curvature.}

\begin{abstract}
We provide conditions for a compact gradient hyperbolic Ricci and a compact gradient hyperbolic Yamabe soliton to be trivial, hence, the manifold to be an Einstein manifold in the first case, and a manifold of constant scalar curvature, in the second case. In particular, we prove that for a compact gradient hyperbolic Yamabe soliton of dimension $>2$, if the second Lie derivative of the metric in the direction of the potential vector field is trace-free and divergence-free, then the above conclusion is reached.
\end{abstract}

\section{Preliminaries}

Stationary solutions to different geometric flows \cite{hip2, li, hip1}, the geometric solitons have been lately intensively studied from various points of view. Recently considered, the hyperbolic Ricci and the hyperbolic Yamabe solitons have not been extensively treated yet.

We recall that a \textit{hyperbolic Ricci soliton} is a self-similar solution of the \textit{hyperbolic Ricci flow} \cite{hi}
 $$\frac{\partial^2 g}{\partial t^2}(t)=-\Ric(t)g(t)$$
for a time-depending (semi-)Riemannian metric $g$ on a smooth manifold $M$, where $\Ric$ denotes the Ricci curvature of $(M,g)$. Therefore, a \textit{hyperbolic Ricci soliton} satisfies the equation
\begin{align}\label{e11}
\pounds_{\xi}\pounds_{\xi}g+\lambda \pounds_{\xi}g+\Ric&=\mu g
\end{align}
for a smooth vector field $\xi$ and two real scalars $\lambda$ and $\mu$, where $\pounds_{\xi}g$ is the Lie derivative of the metric $g$ into the direction of $\xi$ and $\pounds_{\xi}\pounds_{\xi}g:=\pounds_{\xi}(\pounds_{\xi}g)$. If $\xi$ is a $2$-Killing vector field, i.e., $\pounds_{\xi}\pounds_{\xi}g=0$, then a hyperbolic Ricci soliton is just a Ricci soliton.

In \cite{Adara}, we introduced the notion of \textit{hyperbolic Yamabe flow} as being an evolution equation
$$\frac{\partial^2 g}{\partial t^2}(t)=-r(t)g(t)$$
for a time-depending (semi-)Riemannian metric $g$ on a smooth manifold $M$, where $r$ denotes the scalar curvature of $(M,g)$; and its self-similar solutions, namely, the \textit{hyperbolic Yamabe solitons}, which satisfy the equation
\begin{align}\label{e1}
\pounds_{\xi}\pounds_{\xi}g+\lambda \pounds_{\xi}g&=(\mu-r)g
\end{align}
for a smooth vector field $\xi$ and two real scalars $\lambda$ and $\mu$. If $\xi$ is a $2$-Killing vector field, then a hyperbolic Yamabe soliton is just a Yamabe soliton.

In both of these cases, we shall call a soliton \textit{trivial} if its potential vector field $\xi$ is a Killing vector field, i.e., if $\pounds_{\xi}g=0$.
We notice that a trivial hyperbolic Ricci soliton is an Einstein manifold, and a trivial hyperbolic Yamabe soliton is a manifold of constant scalar curvature.

The aim of the present paper is to provide conditions for a compact gradient hyperbolic Ricci and a compact gradient hyperbolic Yamabe soliton to be a trivial soliton.

In the rest of the paper we shall consider $g$ a Riemannian metric.

\section{Triviality conditions}

Let us first remark
that a hyperbolic Ricci and a hyperbolic Yamabe soliton with potential vector field $\xi$ such that $\div(\xi)=$ constant and $\trace(\pounds_{\xi}\pounds_{\xi}g)=$ constant is a manifold of constant scalar curvature. We shall further provide sufficient conditions for a compact gradient hyperbolic Ricci and a compact gradient hyperbolic Yamabe soliton $(M,g,\xi, \lambda,\mu)$ with $\lambda\neq 0$ and $\trace(\pounds_{\xi}\pounds_{\xi}g)=0$ to be trivial. In this case, the manifold will be of constant scalar curvature.

We immediately have
\begin{proposition}
A vector field $\xi$ on a compact Riemannian manifold with $\pounds_{\xi}\pounds_{\xi}g$ trace-free and satisfying $\int_M {\Ric}(\xi,\xi)\leq 0$ is a parallel vector field.
\end{proposition}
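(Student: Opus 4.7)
The approach is to combine an integrated consequence of the trace-free hypothesis with the classical Bochner--Weitzenböck integral formula for a vector field on a compact Riemannian manifold. The target is to force both the symmetric part $S$ and the antisymmetric part $A$ of $\nabla\xi$ to vanish.

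The first step is to establish the pointwise identity
\[
\trace(\pounds_{\xi}\pounds_{\xi}g) \;=\; 2\,\xi(\div\xi) + |\pounds_{\xi}g|^{2},
\]
which I would obtain by expanding $(\pounds_{\xi}h)_{ij}$ for $h = \pounds_{\xi}g$ in a local orthonormal frame and using the fact that only the symmetric part of $\nabla\xi$ contracts nontrivially with the symmetric tensor $h$. Integrating the vanishing of this trace over $M$ and applying the divergence theorem in the form $\int_{M}\xi(\div\xi)\,dV = -\int_{M}(\div\xi)^{2}\,dV$ gives
\[
\int_{M}|\pounds_{\xi}g|^{2}\,dV \;=\; 2\int_{M}(\div\xi)^{2}\,dV. \qquad (\ast)
\]

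The second step is to apply the Bochner--Weitzenböck formula to the $1$-form metrically dual to $\xi$. With the decomposition $\nabla\xi = S + A$, and using $|\pounds_{\xi}g|^{2} = 4|S|^{2}$ together with $|\nabla\xi|^{2} = |S|^{2}+|A|^{2}$, the integrated Weitzenböck identity rearranges to
\[
\int_{M}|A|^{2}\,dV + \int_{M}(\div\xi)^{2}\,dV \;=\; \int_{M}|S|^{2}\,dV + \int_{M}\Ric(\xi,\xi)\,dV.
\]
Substituting $(\ast)$ in the equivalent form $\int_{M}|S|^{2}\,dV = \tfrac{1}{2}\int_{M}(\div\xi)^{2}\,dV$ and rearranging yields
\[
\int_{M}|A|^{2}\,dV + \tfrac{1}{2}\int_{M}(\div\xi)^{2}\,dV \;=\; \int_{M}\Ric(\xi,\xi)\,dV \;\leq\; 0.
\]
Non-negativity of the left-hand side forces both terms to vanish, so $A\equiv 0$ and $\div\xi\equiv 0$; then $(\ast)$ also gives $S\equiv 0$. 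Hence $\nabla\xi = S+A = 0$, i.e.\ $\xi$ is parallel.

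The main obstacle I anticipate is the careful bookkeeping of numerical constants in the trace identity---factors of $2$ coming from $\pounds_{\xi}g = 2 S^{\flat}$ and from the standard convention for the $2$-form norm---since a miscount here would break the cancellation that makes the final integral inequality conclusive. Once the pointwise trace identity is correctly established, the combination with the Weitzenböck formula is routine.
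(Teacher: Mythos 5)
Your proof is correct, but it takes a different route from the paper's. The paper quotes (from a cited reference) the pointwise identity $\trace(\pounds_{\xi}\pounds_{\xi}g)=2\big(\Vert\nabla\xi\Vert^{2}+\div(\nabla_{\xi}\xi)-\Ric(\xi,\xi)\big)$, so that the trace-free hypothesis plus a single integration (the divergence term drops out on a compact manifold) gives $\int_{M}\Ric(\xi,\xi)=\int_{M}\Vert\nabla\xi\Vert^{2}\geq 0$, and the sign hypothesis forces $\nabla\xi=0$ in one stroke --- no decomposition of $\nabla\xi$ is needed. You instead derive the equivalent trace identity $\trace(\pounds_{\xi}\pounds_{\xi}g)=2\,\xi(\div\xi)+\vert\pounds_{\xi}g\vert^{2}$ (which is indeed the paper's formula rewritten via $\div(\nabla_{\xi}\xi)=\Ric(\xi,\xi)+\xi(\div\xi)+\vert S\vert^{2}-\vert A\vert^{2}$), integrate it to get $\int_{M}\vert S\vert^{2}=\tfrac12\int_{M}(\div\xi)^{2}$, and then feed this into Yano's integral formula to kill $A$, $\div\xi$, and finally $S$ separately; your constants all check out. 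What your argument buys is self-containedness --- you do not need the cited trace formula as a black box, and the intermediate identity $\int_{M}\vert A\vert^{2}+\tfrac12\int_{M}(\div\xi)^{2}=\int_{M}\Ric(\xi,\xi)$ is informative in its own right --- at the cost of invoking the Bochner--Yano integral formula (which the paper only uses later, in a different theorem) and of more bookkeeping. The paper's argument is shorter and needs only the one identity, but leans entirely on the unpublished reference for it.
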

\begin{proof}
We know \cite{AC0} that
$$\trace(\pounds_{\xi}\pounds_{\xi}g)=2\Big(\Vert \nabla {\xi}\Vert^2+\div(\nabla_{\xi}\xi)-\Ric(\xi,\xi)\Big),$$
therefore, in this case, we have
$$\Ric(\xi,\xi)=\Vert \nabla {\xi}\Vert^2+\div(\nabla_{\xi}\xi),$$
which, by integration, gives the conclusion.
\end{proof}

And we deduce
\begin{theorem}\label{c}
A compact hyperbolic Ricci and a compact hyperbolic Yamabe soliton $(M, g, \xi, \lambda,\mu)$ with $\lambda\neq 0$ such that $\pounds_{\xi}\pounds_{\xi}g$ is trace-free and
$$\int_M {\Ric}(\xi,\xi)\leq 0$$ is a trivial soliton.
\end{theorem}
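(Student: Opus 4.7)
The plan is to apply the preceding Proposition directly to the potential vector field $\xi$. The hypotheses of the Proposition---compactness of $M$, the trace-free condition on $\pounds_{\xi}\pounds_{\xi}g$, and the integral inequality $\int_M \Ric(\xi,\xi)\leq 0$---coincide exactly with the assumptions of the Theorem. Notably, the soliton equations \eqref{e11} and \eqref{e1} are not invoked at this step; they will only intervene to identify what ``trivial'' means geometrically in each case.

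From the Proposition, I obtain that $\xi$ is parallel, that is, $\nabla \xi = 0$ on $M$. Since $(\pounds_{\xi}g)(X,Y) = g(\nabla_X \xi,Y) + g(X,\nabla_Y \xi)$, parallelism forces $\pounds_{\xi}g=0$, so $\xi$ is Killing. By the definition of triviality given in Section~1, the soliton is therefore trivial, and I would briefly record the two automatic consequences: substituting $\pounds_{\xi}g=0$ (hence also $\pounds_{\xi}\pounds_{\xi}g=0$) into \eqref{e11} yields $\Ric=\mu g$ (Einstein), while substituting into \eqref{e1} yields $r=\mu$ (constant scalar curvature).

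There is essentially no obstacle in this argument; the real content is packaged into the preceding Proposition, whose proof rests on the integrated Bochner/Yano-type identity $\trace(\pounds_{\xi}\pounds_{\xi}g) = 2(\Vert \nabla \xi \Vert^2 + \div(\nabla_{\xi}\xi) - \Ric(\xi,\xi))$, together with the fact that on a compact manifold $\int_M \div(\nabla_{\xi}\xi) = 0$. I would also remark that the hypothesis $\lambda \neq 0$ does not appear to be used in this short deduction and seems to be retained only for uniformity with the gradient-soliton triviality theorems that follow in the paper, where $\lambda$ does play a genuine role.
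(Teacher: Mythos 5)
Your proof is correct and follows exactly the route the paper intends: the theorem is stated immediately after the Proposition with ``And we deduce,'' i.e., one applies the Proposition to conclude $\xi$ is parallel, hence Killing, hence the soliton is trivial. Your side observation that the hypothesis $\lambda\neq 0$ plays no role in this deduction is also accurate.
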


We shall further provide conditions for a hyperbolic Ricci and a hyperbolic Yamabe soliton with potential vector field of gradient type to be trivial.

\bigskip

Let us firstly prove the following lemma.

\begin{lemma}\label{lemma02}
(i) If $(M^n, g, \nabla f, \lambda,\mu)$ is a gradient hyperbolic Yamabe soliton with $\lambda\neq 0$ such that $\pounds_{\nabla f}\pounds_{\nabla f}g$ is trace-free, then
\begin{equation}\label{e2}
\frac{1}{2}\Delta(\Vert \nabla f\Vert^2)=\Vert\nabla \nabla f\Vert^2+{\Ric}(\nabla f,\nabla f)-\frac{n}{2\lambda}g(\nabla f,\nabla r).
\end{equation}
Even more, we have
\begin{align*}
\frac{1}{2}\Delta(\Vert\nabla f\Vert^2)&=2\Vert\nabla \nabla f\Vert^2+\div(\nabla_{\nabla f}\nabla f)-\frac{n}{2\lambda}g(\nabla f,\nabla r)\\
&=2{\Ric}(\nabla f,\nabla f)-\div(\nabla_{\nabla f}\nabla f)-\frac{n}{2\lambda}g(\nabla f,\nabla r).
\end{align*}

(ii) If $(M^n, g, \nabla f, \lambda,\mu)$ is a gradient hyperbolic Ricci soliton with $\lambda\neq 0$ such that $\pounds_{\nabla f}\pounds_{\nabla f}g$ is trace-free, then
\begin{equation}\label{e22}
\frac{1}{2}\Delta(\Vert \nabla f\Vert^2)=\Vert\nabla \nabla f\Vert^2+{\Ric}(\nabla f,\nabla f)-\frac{1}{2\lambda}g(\nabla f,\nabla r).
\end{equation}
Even more, we have
\begin{align*}
\frac{1}{2}\Delta(\Vert \nabla f\Vert^2)&=2\Vert\nabla \nabla f\Vert^2+\div(\nabla_{\nabla f}\nabla f)-\frac{1}{2\lambda}g(\nabla f,\nabla r)\\
&=2{\Ric}(\nabla f,\nabla f)-\div(\nabla_{\nabla f}\nabla f)-\frac{1}{2\lambda}g(\nabla f,\nabla r).
\end{align*}
\end{lemma}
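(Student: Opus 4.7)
The plan is, in both cases, to extract the Laplacian $\Delta f$ from the soliton equation by tracing against $g$, then to apply Bochner's formula, and finally to re-use the identity for $\trace(\pounds_\xi\pounds_\xi g)$ recalled in the proof of the previous proposition in order to derive the two alternative expressions in each part.

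First, for part (i), I would take the trace of the hyperbolic Yamabe soliton equation (\ref{e1}) with $\xi=\nabla f$. Using $\trace(\pounds_{\nabla f}g)=2\Delta f$ together with the standing hypothesis $\trace(\pounds_{\nabla f}\pounds_{\nabla f}g)=0$, this gives $2\lambda\,\Delta f=n(\mu-r)$, so that $\nabla(\Delta f)=-\frac{n}{2\lambda}\nabla r$. For part (ii), tracing (\ref{e11}) in the same way yields $2\lambda\,\Delta f+r=n\mu$, hence $\nabla(\Delta f)=-\frac{1}{2\lambda}\nabla r$.

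Next I would apply the classical Bochner formula
$$\tfrac12\Delta(\Vert\nabla f\Vert^2)=\Vert\nabla\nabla f\Vert^2+g(\nabla f,\nabla(\Delta f))+\Ric(\nabla f,\nabla f)$$
and substitute the expressions for $\nabla(\Delta f)$ just found; this produces (\ref{e2}) in case (i) and (\ref{e22}) in case (ii).

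For the two further identities displayed in each part, I would invoke the same formula used in the proof of the previous proposition, namely
$$\trace(\pounds_{\nabla f}\pounds_{\nabla f}g)=2\bigl(\Vert\nabla\nabla f\Vert^2+\div(\nabla_{\nabla f}\nabla f)-\Ric(\nabla f,\nabla f)\bigr),$$
which under the trace-free hypothesis yields $\Ric(\nabla f,\nabla f)=\Vert\nabla\nabla f\Vert^2+\div(\nabla_{\nabla f}\nabla f)$. Substituting this into (\ref{e2}) (respectively (\ref{e22})) to eliminate $\Ric(\nabla f,\nabla f)$ gives the form with $2\Vert\nabla\nabla f\Vert^2+\div(\nabla_{\nabla f}\nabla f)$, whereas substituting to eliminate $\Vert\nabla\nabla f\Vert^2$ gives the form with $2\Ric(\nabla f,\nabla f)-\div(\nabla_{\nabla f}\nabla f)$. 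The whole argument is essentially bookkeeping; the only minor obstacle is to keep the constants straight, the factor $n$ in (i) arising from $\trace g=n$ on the right of the Yamabe equation, and its absence in (ii) stemming from $\trace\Ric=r$ absorbing that factor on the trace side of the Ricci equation.
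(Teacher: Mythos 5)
Your proof is correct and follows essentially the same route as the paper's: trace the soliton equation to get $2\lambda\,\Delta f=n(\mu-r)$ (resp.\ $2\lambda\,\Delta f=n\mu-r$), substitute the resulting expression for $g(\nabla(\Delta f),\nabla f)$ into Bochner's formula, and then use the identity $\trace(\pounds_{\nabla f}\pounds_{\nabla f}g)=2\bigl(\Vert\nabla\nabla f\Vert^2+\div(\nabla_{\nabla f}\nabla f)-\Ric(\nabla f,\nabla f)\bigr)$ to obtain the two further displays. The paper states those last two identities without comment; your substitution argument supplies exactly the intended bookkeeping, with all constants correct.
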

\begin{proof}
(i) By taking the trace into the soliton equation \eqref{e1}, we get
$$2\lambda \Delta(f)=n(\mu-r),$$
hence
$$2\lambda g(\nabla (\Delta(f)),\nabla f)=-ng(\nabla f,\nabla r),$$
which, replaced into the Bochner's formula \cite{Yano Boch}
$$\frac{1}{2}\Delta(\Vert\nabla f\Vert^2)=\Vert\nabla\nabla f\Vert^2+{\Ric}(\nabla f,\nabla f)+g(\nabla (\Delta(f)),\nabla f)$$
gives \eqref{e2}.

In a similar way, we obtain \eqref{e22}, since in the hyperbolic Ricci soliton case, we have $2\lambda \Delta(f)=n\mu-r$.
\end{proof}

Using this lemma, we prove

\begin{theorem}\label{t1}
Let $(M^n, g, \nabla f, \lambda,\mu)$ be a compact gradient hyperbolic Yamabe soliton with $\lambda\neq 0$ such that $\pounds_{\nabla f}\pounds_{\nabla f}g$ is trace-free. If 
$$\int_M {\Ric}(\nabla f,\nabla f)\geq \frac{\displaystyle n}{\displaystyle 2\lambda}\int_M g(\nabla f,\nabla r),$$ then the soliton is trivial.
\end{theorem}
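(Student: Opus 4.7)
The plan is to apply Lemma \ref{lemma02}(i) and then integrate over the compact manifold. The identity
$$\frac{1}{2}\Delta(\Vert \nabla f\Vert^2)=\Vert\nabla \nabla f\Vert^2+\Ric(\nabla f,\nabla f)-\frac{n}{2\lambda}g(\nabla f,\nabla r)$$
established there is essentially tailor-made for this argument, since its left-hand side is a total Laplacian and its right-hand side is a sum of a nonnegative term and the two quantities appearing in the hypothesis.

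First, I would integrate this identity over $M$. Because $M$ is compact without boundary, the divergence theorem gives $\int_M \Delta(\Vert \nabla f\Vert^2)=0$, so the integrated identity reduces to
$$\int_M \Vert\nabla \nabla f\Vert^2=\frac{n}{2\lambda}\int_M g(\nabla f,\nabla r)-\int_M \Ric(\nabla f,\nabla f).$$
Now the hypothesis $\int_M \Ric(\nabla f,\nabla f)\geq \frac{n}{2\lambda}\int_M g(\nabla f,\nabla r)$ is exactly what is needed to force the right-hand side to be $\leq 0$.

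Second, since the integrand $\Vert\nabla\nabla f\Vert^2$ is pointwise nonnegative, the only way its integral can be nonpositive is if it vanishes identically. Hence $\nabla\nabla f\equiv 0$, i.e., $\nabla f$ is a parallel vector field on $M$. In particular, $\pounds_{\nabla f}g=0$, so $\nabla f$ is Killing and the soliton is trivial by the definition recalled in the introduction.

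There is no real obstacle beyond invoking the right formula from Lemma \ref{lemma02}(i); the compactness of $M$ eliminates the Laplacian term, the sign condition on the hypothesis is calibrated precisely so that the nonnegative $L^2$-norm of the Hessian is forced to vanish, and the passage from $\nabla\nabla f=0$ to triviality is immediate. Note that no sign assumption on $\lambda$ itself is needed, since the dependence on $\on{sign}(\lambda)$ is already built into the stated inequality.
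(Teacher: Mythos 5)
Your proof is correct and follows essentially the same route as the paper: integrate the identity of Lemma \ref{lemma02}(i) over the compact manifold, use the hypothesis to force $\int_M \Vert\nabla\nabla f\Vert^2\leq 0$, and conclude $\nabla\nabla f=0$, hence $\pounds_{\nabla f}g=2\,\Hess f=0$. The paper additionally notes that $\Delta(f)=0$ forces $f$ to be constant and $r=\mu$, but that is only an elaboration of the same conclusion.
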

\begin{proof}
By integrating the relation \eqref{e2}, we get
$$\int_M \Vert\nabla \nabla f\Vert^2=\int_M \Big(\frac{n}{2\lambda}g(\nabla f,\nabla r)-{\Ric}(\nabla f,\nabla f)\Big),$$
hence $\nabla \nabla f=0$. In this case, $\Delta(f)=0$; therefore, $f$ is a constant since the manifold is compact, and the scalar curvature $r$ is constant equal to $\mu$.
\end{proof}

Similarly, we get
\begin{theorem}\label{t2}
Let $(M, g, \nabla f, \lambda,\mu)$ be a compact gradient hyperbolic Ricci soliton with $\lambda\neq 0$ such that $\pounds_{\nabla f}\pounds_{\nabla f}g$ is trace-free. If
$$\int_M {\Ric}(\nabla f,\nabla f)\geq \frac{\displaystyle 1}{\displaystyle 2\lambda}\int_M g(\nabla f,\nabla r),$$ then the soliton is trivial.
\end{theorem}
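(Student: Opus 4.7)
The plan is to mimic the argument for Theorem \ref{t1}, replacing the Yamabe input from Lemma \ref{lemma02}(i) with its Ricci analogue Lemma \ref{lemma02}(ii). Concretely, I would start from the identity
$$\frac{1}{2}\Delta(\Vert \nabla f\Vert^2)=\Vert\nabla \nabla f\Vert^2+{\Ric}(\nabla f,\nabla f)-\frac{1}{2\lambda}g(\nabla f,\nabla r),$$
which is already established in the lemma under the hypotheses of the theorem (gradient hyperbolic Ricci soliton, $\lambda\neq 0$, $\pounds_{\nabla f}\pounds_{\nabla f}g$ trace-free).

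Next I would integrate this identity over the compact manifold $M$. The left-hand side vanishes by the divergence theorem applied to $\nabla(\Vert\nabla f\Vert^{2})$ on the closed manifold, yielding
$$\int_{M}\Vert\nabla\nabla f\Vert^{2}=\frac{1}{2\lambda}\int_{M}g(\nabla f,\nabla r)-\int_{M}\Ric(\nabla f,\nabla f).$$
By the standing assumption
$$\int_{M}\Ric(\nabla f,\nabla f)\geq \frac{1}{2\lambda}\int_{M}g(\nabla f,\nabla r),$$
the right-hand side is non-positive, while the left-hand side is non-negative. Hence $\nabla\nabla f\equiv 0$ on $M$.

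From $\nabla\nabla f=0$ I take the trace to get $\Delta f=0$, and compactness forces $f$ to be constant, so $\nabla f\equiv 0$ and $\pounds_{\nabla f}g=0$; this is the triviality assertion. Substituting $\xi=\nabla f=0$ into the defining equation \eqref{e11} then yields $\Ric=\mu g$, so $M$ is Einstein and in particular has constant scalar curvature, as promised. I do not anticipate a genuine obstacle here: once Lemma \ref{lemma02}(ii) is in hand, everything reduces to an integration-by-parts argument on a closed manifold, exactly parallel to the Yamabe case. The only minor point to check is that $\lambda\neq 0$ is genuinely used (it is, to solve $2\lambda\Delta f=n\mu-r$ for $\Delta f$ in the lemma), which is why the hypothesis appears in the statement.
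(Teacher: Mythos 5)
Your proposal is correct and follows exactly the route the paper intends: the paper proves Theorem \ref{t2} "similarly" to Theorem \ref{t1}, i.e., by integrating the identity \eqref{e22} from Lemma \ref{lemma02}(ii) over the compact manifold, concluding $\nabla\nabla f=0$, hence $\Delta f=0$ and $f$ constant. Your additional remark that the trivial soliton is then Einstein is a correct (and welcome) elaboration of what "trivial" yields via \eqref{e11}.
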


By means of Theorems \ref{c}, \ref{t1} and \ref{t2}, we can state
\begin{corollary}
Let $(M, g, \nabla f, \lambda,\mu)$ be a compact gradient hyperbolic Ricci or a compact gradient hyperbolic Yamabe soliton with $\lambda\neq 0$ such that $\pounds_{\nabla f}\pounds_{\nabla f}g$ is trace-free. If 
$$\lambda \int_M g(\nabla f,\nabla r)\leq 0,$$ then the soliton is trivial.
\end{corollary}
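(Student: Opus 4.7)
My plan is to derive the corollary as a straightforward combination of the three theorems already proved. The key observation is that the hypothesis $\lambda\int_M g(\nabla f,\nabla r)\leq 0$ is formulated as a product precisely so that it subsumes both signs of $\lambda$ in a uniform way. Dividing by the positive quantity $2\lambda^{2}$, it rewrites as
$$\frac{c}{2\lambda}\int_M g(\nabla f,\nabla r)\leq 0$$
for every positive constant $c$, in particular for $c=n$ and $c=1$. This is the form that matches the right-hand sides appearing in Theorems \ref{t1} and \ref{t2}.

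Next I would perform a dichotomy on the sign of $\int_M {\Ric}(\nabla f,\nabla f)$. If this quantity is nonpositive, then Theorem \ref{c} applies at once and gives the triviality of the soliton (in both the Ricci and the Yamabe case). Otherwise it is strictly positive, and combining this with the reformulated hypothesis yields
$$\int_M {\Ric}(\nabla f,\nabla f)\geq 0\geq \frac{c}{2\lambda}\int_M g(\nabla f,\nabla r),$$
with $c=n$ in the Yamabe case and $c=1$ in the Ricci case. This is precisely the integral assumption of Theorem \ref{t1} (respectively Theorem \ref{t2}), which again gives triviality.

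There is no real obstacle here; the only step that requires a moment of attention is recognizing that the packaging of the hypothesis as a product with $\lambda$ is exactly what allows the case split above to be carried out without any separate treatment of the sub-cases $\lambda>0$ and $\lambda<0$.
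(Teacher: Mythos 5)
Your proof is correct and follows exactly the route the paper intends: the corollary is stated there as an immediate consequence of Theorems \ref{c}, \ref{t1} and \ref{t2}, and your dichotomy on the sign of $\int_M \Ric(\nabla f,\nabla f)$, together with the observation that $\lambda\int_M g(\nabla f,\nabla r)\leq 0$ is equivalent to $\frac{c}{2\lambda}\int_M g(\nabla f,\nabla r)\leq 0$ for any $c>0$, is precisely the way to combine them.
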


Another condition for the soliton to be trivial is given in the following

\begin{theorem}
(i) Let $(M^n, g, \nabla f, \lambda,\mu)$ be a compact gradient hyperbolic Yamabe soliton with $\lambda\neq 0$ such that $\pounds_{\nabla f}\pounds_{\nabla f}g$ is trace-free.
If $$\int_M \Ric(\nabla f,\nabla f) \geq \frac{\displaystyle n^2}{\displaystyle 4\lambda^2}\int_M (\mu-r)^2,$$ then the soliton is trivial.

(ii) Let $(M^n, g, \nabla f, \lambda,\mu)$ be a compact gradient hyperbolic Ricci soliton with $\lambda\neq 0$ such that $\pounds_{\nabla f}\pounds_{\nabla f}g$ is trace-free.
If $$\int_M \Ric(\nabla f,\nabla f) \geq \frac{\displaystyle 1}{\displaystyle 4\lambda^2}\int_M (n\mu-r)^2,$$ then the soliton is trivial.
\end{theorem}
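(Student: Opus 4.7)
The plan is to mirror the proofs of Theorems \ref{t1} and \ref{t2}, but to rewrite the right-hand term $\frac{n}{2\lambda}\int_M g(\nabla f,\nabla r)$ (respectively $\frac{1}{2\lambda}\int_M g(\nabla f,\nabla r)$) as a manifestly non-negative integral of a squared scalar, so that the Ricci hypothesis can dominate it directly.

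For part (i), I start from relation \eqref{e2} in Lemma \ref{lemma02} and integrate over the compact manifold $M$. Since $\int_M \Delta(\Vert\nabla f\Vert^2)=0$, this yields
\[
\int_M \Vert\nabla\nabla f\Vert^2 + \int_M \Ric(\nabla f,\nabla f) = \frac{n}{2\lambda}\int_M g(\nabla f,\nabla r).
\]
Next I invoke the trace identity $2\lambda\Delta f = n(\mu-r)$ already established in the proof of Lemma \ref{lemma02}. Since $\int_M \Delta f = 0$ by compactness, the average of $r$ equals $\mu$, and hence $\int_M \mu(\mu-r)=0$. Combining Green's identity $\int_M g(\nabla f,\nabla r) = -\int_M r\,\Delta f$ with the trace identity then collapses $\int_M g(\nabla f,\nabla r)$ to $\frac{n}{2\lambda}\int_M (\mu-r)^2$. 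Substituting back gives
\[
\int_M \Vert\nabla\nabla f\Vert^2 = \frac{n^2}{4\lambda^2}\int_M (\mu-r)^2 - \int_M \Ric(\nabla f,\nabla f),
\]
and the standing hypothesis forces the right-hand side to be $\le 0$. Therefore $\nabla\nabla f = 0$, so $\Delta f = 0$, and $f$ is constant by compactness, exactly as at the end of the proof of Theorem \ref{t1}; the soliton is trivial and $r\equiv\mu$.

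Part (ii) is structurally identical: I use \eqref{e22} instead of \eqref{e2} and the Ricci soliton trace identity $2\lambda\Delta f = n\mu - r$ (from Lemma \ref{lemma02}). The same Green's identity manipulation produces $\int_M g(\nabla f,\nabla r) = \frac{1}{2\lambda}\int_M (n\mu-r)^2$, and substitution into the integrated form of \eqref{e22} closes the argument in the same way.

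The main (essentially only) obstacle is discovering the correct rewriting of $\int_M g(\nabla f,\nabla r)$. The crucial observation is that integrating the trace identity over compact $M$ eliminates the additive constant (either $\mu$ or $n\mu$), so that $\int_M g(\nabla f,\nabla r) = -\int_M r\,\Delta f$ collapses to a perfect square integral with precisely the factor $\frac{n}{2\lambda}$ (resp. $\frac{1}{2\lambda}$) that matches the coefficient $\frac{n^2}{4\lambda^2}$ (resp. $\frac{1}{4\lambda^2}$) appearing in the hypothesis. Once this identification is made, both parts follow immediately from the template of Theorems \ref{t1} and \ref{t2}.
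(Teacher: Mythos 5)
Your proof is correct and rests on the same engine as the paper's: the integrated Bochner formula combined with the trace identity $2\lambda\Delta f=n(\mu-r)$ (resp.\ $2\lambda\Delta f=n\mu-r$), leading to $\int_M\Vert\nabla\nabla f\Vert^2=\frac{n^2}{4\lambda^2}\int_M(\mu-r)^2-\int_M\Ric(\nabla f,\nabla f)\le 0$. The only real difference is cosmetic: the paper substitutes $(\Delta f)^2=\frac{n^2(\mu-r)^2}{4\lambda^2}$ pointwise in the Yano--Bochner integral formula, whereas you arrive at the same square term via Green's identity $\int_M g(\nabla f,\nabla r)=-\int_M r\,\Delta f$ together with the (correct) observation that $\int_M(\mu-r)=0$ forces $\int_M\mu(\mu-r)=0$; the two manipulations are equivalent, and your closing step ($\nabla\nabla f=0$, hence $f$ constant and $r\equiv\mu$) matches the paper's.
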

\begin{proof}
(i) Using the Bochner's formula \cite{Yano Boch}, we have
\begin{align*}
0&=\int_M\Big(\frac{1}{2}\Vert \pounds_{\nabla f}g\Vert^2-\Vert \nabla\nabla f\Vert^2-(\div(\nabla f))^2+\Ric(\nabla f,\nabla f)\Big)\\
&= \int_M \Big(\Vert \nabla \nabla f\Vert^2-(\Delta (f))^2+\Ric(\nabla f,\nabla f)\Big),
\end{align*}
which implies
\begin{align*}
\int_M \Vert \nabla \nabla f\Vert^2&=\int_M\Big((\Delta (f))^2-\Ric(\nabla f,\nabla f)\Big)\\
&=\int_M \Big(\frac{n^2(\mu-r)^2}{4\lambda^2}-\Ric(\nabla f,\nabla f)\Big)\leq 0;
\end{align*}
therefore, $\nabla \nabla f=0$.

In a similar way, we get the conclusion for the hyperbolic Ricci soliton case, since we have
$\Delta(f)=\frac{\displaystyle n\mu-r}{\displaystyle 2\lambda}$.
\end{proof}

\bigskip

If $\pounds_{\nabla f}\pounds_{\nabla f}g$ has constant trace and it's divergence-free, we obtain

\begin{lemma}\label{p1}
(i) Let $(M^n, g, \nabla f, \lambda,\mu)$ be a gradient hyperbolic Yamabe soliton with $\lambda\neq 0$ such that $\pounds_{\nabla f}\pounds_{\nabla f}g$ has constant trace and it's divergence-free. Then, for any vector field $X$ on $M$, we have
$${\Ric}(X,\nabla f)=\frac{n-1}{2\lambda}g(X,\nabla r).$$

(ii) Let $(M^n, g, \nabla f, \lambda,\mu)$ be a gradient hyperbolic Ricci soliton with $\lambda\neq 0$ such that $\pounds_{\nabla f}\pounds_{\nabla f}g$ has constant trace and it's divergence-free. Then, for any vector field $X$ on $M$, we have
$${\Ric}(X,\nabla f)=\frac{\displaystyle 1}{\displaystyle 4\lambda}g(X,\nabla r).$$
\end{lemma}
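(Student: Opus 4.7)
The plan is to apply two independent operations to the soliton equation — taking the trace and taking the divergence — and then combine the resulting identities for an arbitrary vector field $Y$.

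For part (i), I would first take the trace of
$\pounds_{\nabla f}\pounds_{\nabla f}g+\lambda \pounds_{\nabla f}g=(\mu-r)g$.
Since $\trace(\pounds_{\nabla f}g)=2\Delta f$ and $\trace(\pounds_{\nabla f}\pounds_{\nabla f}g)$ is constant by hypothesis, applying $Y$ to the resulting scalar equation yields $2\lambda\,Y(\Delta f)=-n\,g(\nabla r,Y)$. Next I would take the divergence of the soliton equation viewed as an identity of symmetric $(0,2)$-tensors. The first term dies by the divergence-free hypothesis; the middle term is handled through the classical Ricci identity $\div(\Hess f)(Y)=Y(\Delta f)+\Ric(\nabla f,Y)$, which gives $\div(\pounds_{\nabla f}g)(Y)=2Y(\Delta f)+2\Ric(\nabla f,Y)$; and the right-hand side is $\div((\mu-r)g)(Y)=-g(\nabla r,Y)$. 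Substituting the trace relation above to eliminate $Y(\Delta f)$ then produces exactly $\Ric(\nabla f,Y)=\frac{n-1}{2\lambda}g(\nabla r,Y)$.

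For part (ii), the same scheme applies to the hyperbolic Ricci equation $\pounds_{\nabla f}\pounds_{\nabla f}g+\lambda\pounds_{\nabla f}g+\Ric=\mu g$ with only two modifications. The trace step now includes $\trace(\Ric)=r$ and so gives $2\lambda\,Y(\Delta f)=-g(\nabla r,Y)$. The divergence step picks up $(\div\Ric)(Y)=\frac{1}{2}g(\nabla r,Y)$ from the contracted second Bianchi identity, while $\div(\mu g)$ vanishes on the right. Combining the two yields $\Ric(\nabla f,Y)=\frac{1}{4\lambda}g(\nabla r,Y)$.

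The only ingredient beyond the soliton equation itself is the identity $\div(\Hess f)=d(\Delta f)+\Ric(\nabla f,\cdot)$, which is standard and not a real obstacle. The main subtlety is being careful with sign conventions for the divergence of a symmetric tensor and for $\Delta$; once those are fixed consistently, the remainder is purely algebraic manipulation of the two scalar consequences of the soliton equation.
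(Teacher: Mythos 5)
Your proposal is correct and follows essentially the same route as the paper: take the trace of the soliton equation (using the constancy of $\trace(\pounds_{\nabla f}\pounds_{\nabla f}g)$) to express $Y(\Delta f)$ in terms of $\nabla r$, take the divergence (using the divergence-free hypothesis, the identity $\div(\pounds_{\nabla f}g)(Y)=2Y(\Delta f)+2\Ric(\nabla f,Y)$, and in case (ii) the contracted Bianchi identity $\div(\Ric)=\tfrac{1}{2}dr$), and eliminate $Y(\Delta f)$. The resulting coefficients $\tfrac{n-1}{2\lambda}$ and $\tfrac{1}{4\lambda}$ come out exactly as in the paper.
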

\begin{proof}
(i) We know \cite{a} that, for any vector field $X$ on $M$, we have
$$\div(\pounds_{\nabla f}g)(X)=2X(\Delta(f))+2{\Ric}(X,\nabla f).$$
By taking the divergence into the soliton equation \eqref{e1}, we get
$$\div(\pounds_{\nabla f}g)(X)=-\frac{g(X,\nabla r)}{\lambda},$$
therefore,
$$g(X,\nabla (\Delta(f)))+{\Ric}(X,\nabla f)=-\frac{1}{2\lambda}g(X,\nabla r).$$
But $$\Delta(f)=\frac{n(\mu-r)}{2\lambda}-\frac{1}{2\lambda}\trace(\pounds_{\nabla f}\pounds_{\nabla f}g),$$
hence
$$g(X,\nabla (\Delta(f)))=-\frac{n}{2\lambda}g(X,\nabla r),$$
and we get
$${\Ric}(X,\nabla f)=\frac{n-1}{2\lambda}g(X,\nabla r),$$
hence the conclusion.

In a similar way, we get the conclusion for the hyperbolic Ricci soliton case, by using Schur's Lemma, $\div(\Ric)=\frac{\displaystyle dr}{\displaystyle 2}$.
\end{proof}

\begin{remark}
The result form Lemma \ref{p1} holds true if we add the connectness condition on the manifold, keep $\lambda\neq 0$, and ask for $\pounds_{\nabla f}\pounds_{\nabla f}g$ to be only divergence-free.
\end{remark}

The above conditions lead also to the following result
\begin{proposition}
Let $(M^n, g, \nabla f, \lambda,\mu)$ be a compact and connected gradient hyperbolic Ricci or a compact and connected gradient hyperbolic Yamabe soliton with $\lambda\neq 0$ such that $\pounds_{\nabla f}\pounds_{\nabla f}g$ is divergence-free. If
$$\lambda\int_M{\Ric}(\nabla f, \nabla r)\leq 0,$$
then the manifold is of constant scalar curvature.
\end{proposition}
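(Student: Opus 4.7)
My plan is to reduce the claim to $\nabla r = 0$ by plugging $X = \nabla r$ into the identity furnished by Lemma~\ref{p1}, in the strengthened form noted in the remark immediately preceding the proposition. That remark allows us to drop the ``constant trace'' assumption on $\pounds_{\nabla f}\pounds_{\nabla f}g$ at the price of connectedness, which we are given, so the full conclusion of Lemma~\ref{p1} is available in both cases.

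Concretely, I would first recall the two pointwise identities provided by (the strengthened) Lemma~\ref{p1}: for every vector field $X$ on $M$,
$$\Ric(X,\nabla f)=\frac{n-1}{2\lambda}g(X,\nabla r)\quad\text{(Yamabe case)},\qquad \Ric(X,\nabla f)=\frac{1}{4\lambda}g(X,\nabla r)\quad\text{(Ricci case)}.$$
Then I would specialize to $X=\nabla r$ and use the symmetry of $\Ric$ to rewrite the left-hand side as $\Ric(\nabla f,\nabla r)$. Multiplying through by $\lambda$ gives
$$\lambda\,\Ric(\nabla f,\nabla r)=\frac{n-1}{2}\|\nabla r\|^2\quad\text{(Yamabe)},\qquad \lambda\,\Ric(\nabla f,\nabla r)=\frac{1}{4}\|\nabla r\|^2\quad\text{(Ricci)},$$
so the integrand on the left has a fixed sign, independent of the sign of $\lambda$.

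Integrating over the compact manifold $M$ and invoking the hypothesis $\lambda\int_M\Ric(\nabla f,\nabla r)\leq 0$ forces $\int_M\|\nabla r\|^2\leq 0$, whence $\nabla r\equiv 0$; connectedness of $M$ then yields $r=$ constant. I do not anticipate a genuine obstacle beyond the clean application of the lemma; the only mildly delicate point is that the Yamabe coefficient $\tfrac{n-1}{2}$ must be strictly positive, which holds automatically for $n\geq 2$ (and in dimension $n=1$ the Ricci curvature vanishes identically and the conclusion is trivial). Thus the same one-line sign argument dispatches both the gradient hyperbolic Ricci and gradient hyperbolic Yamabe cases uniformly.
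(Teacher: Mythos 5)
Your proposal is correct and follows essentially the same route as the paper: both apply Lemma~\ref{p1} (in the divergence-free-plus-connectedness form noted in the preceding remark) with $X=\nabla r$ to obtain $\lambda\,\Ric(\nabla f,\nabla r)=\tfrac{n-1}{2}\Vert\nabla r\Vert^2$ (resp.\ $\tfrac{1}{4}\Vert\nabla r\Vert^2$), then integrate and use the sign hypothesis to force $\nabla r=0$.
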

\begin{proof}
It follows from Lemma \ref{p1}, taking into account that
$$\lambda{\Ric}(\nabla f, \nabla r)=\frac{n-1}{2}\Vert \nabla r\Vert^2$$
in the hyperbolic Yamabe case, and
$$\lambda{\Ric}(\nabla f, \nabla r)=\frac{1}{4}\Vert \nabla r\Vert^2$$
in the hyperbolic Ricci case.
\end{proof}

From Lemmas \ref{lemma02} and \ref{p1}, by a direct computation, we get

\begin{proposition}\label{p2}
(i) If $(M^n, g, \nabla f, \lambda,\mu)$ is a gradient hyperbolic Yamabe soliton with $\lambda\neq 0$ such that $\pounds_{\nabla f}\pounds_{\nabla f}g$ is trace-free and divergence-free, then
\begin{align*}
\frac{1}{2}\Delta(\Vert \nabla f\Vert^2)&=\frac{n-2}{n-1}\Vert \nabla \nabla f\Vert^2-\frac{1}{n-1}\div(\nabla_{\nabla f}\nabla f).
\end{align*}

(ii) If $(M^n, g, \nabla f, \lambda,\mu)$ is a gradient hyperbolic Ricci soliton with $\lambda\neq 0$ such that $\pounds_{\nabla f}\pounds_{\nabla f}g$ is trace-free and divergence-free, then
\begin{align*}
\frac{1}{2}\Delta(\Vert \nabla f\Vert^2)&=-\div(\nabla_{\nabla f}\nabla f).
\end{align*}
\end{proposition}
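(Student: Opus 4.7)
The plan is to combine the three equivalent identities for $\frac{1}{2}\Delta(\Vert\nabla f\Vert^2)$ supplied by Lemma \ref{lemma02} with the Ricci identity obtained in Lemma \ref{p1}, specialized to $X=\nabla f$. The trace-free plus divergence-free hypothesis is exactly what activates both lemmas; once they are available, the terms $\Ric(\nabla f,\nabla f)$ and $g(\nabla f,\nabla r)$ can be eliminated algebraically, leaving $\frac{1}{2}\Delta(\Vert\nabla f\Vert^2)$ expressed solely in terms of $\Vert\nabla\nabla f\Vert^2$ and $\div(\nabla_{\nabla f}\nabla f)$.

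Part (ii) is the quicker of the two. Lemma \ref{p1}(ii) with $X=\nabla f$ yields $\Ric(\nabla f,\nabla f)=\frac{1}{4\lambda}g(\nabla f,\nabla r)$, so the term $\frac{1}{2\lambda}g(\nabla f,\nabla r)$ appearing in the identities of Lemma \ref{lemma02}(ii) equals $2\,\Ric(\nabla f,\nabla f)$. Substituting this into the third expression of Lemma \ref{lemma02}(ii), namely $\frac{1}{2}\Delta(\Vert\nabla f\Vert^2)=2\,\Ric(\nabla f,\nabla f)-\div(\nabla_{\nabla f}\nabla f)-\frac{1}{2\lambda}g(\nabla f,\nabla r)$, makes the two Ricci terms cancel and the claim $\frac{1}{2}\Delta(\Vert\nabla f\Vert^2)=-\div(\nabla_{\nabla f}\nabla f)$ drops out immediately.

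Part (i) requires combining two of the three identities. Lemma \ref{p1}(i) gives $\frac{n}{2\lambda}g(\nabla f,\nabla r)=\frac{n}{n-1}\Ric(\nabla f,\nabla f)$. Substituting into the first and third expressions of Lemma \ref{lemma02}(i) yields, respectively, $\frac{1}{2}\Delta(\Vert\nabla f\Vert^2)=\Vert\nabla\nabla f\Vert^2-\frac{1}{n-1}\Ric(\nabla f,\nabla f)$ and $\frac{1}{2}\Delta(\Vert\nabla f\Vert^2)=\frac{n-2}{n-1}\Ric(\nabla f,\nabla f)-\div(\nabla_{\nabla f}\nabla f)$. Taking $(n-2)$ times the former and adding the latter cancels the Ricci term and produces $(n-1)\cdot\frac{1}{2}\Delta(\Vert\nabla f\Vert^2)=(n-2)\Vert\nabla\nabla f\Vert^2-\div(\nabla_{\nabla f}\nabla f)$, which is the asserted formula after dividing by $n-1$.

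There is no real obstacle; the geometric content is entirely absorbed into Lemmas \ref{lemma02} and \ref{p1}, and the present proposition reduces to linear bookkeeping. The only point worth double-checking is the coefficient $\frac{n}{n-1}$ that converts $\frac{n}{2\lambda}g(\nabla f,\nabla r)$ into $\Ric(\nabla f,\nabla f)$ in the Yamabe case; once this is secured, the choice of linear combination is forced by the requirement that $\Ric(\nabla f,\nabla f)$ disappear, and the striking dichotomy between the two cases (a $\frac{n-2}{n-1}$ factor in the Yamabe case versus a full cancellation in the Ricci case) is an automatic consequence of the different normalizations appearing in Lemma \ref{p1}.
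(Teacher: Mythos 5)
Your proof is correct and follows exactly the route the paper intends: the paper's own ``proof'' is just the remark that the result follows from Lemmas \ref{lemma02} and \ref{p1} by direct computation, and your substitution of $\Ric(\nabla f,\nabla f)=\frac{n-1}{2\lambda}g(\nabla f,\nabla r)$ (resp.\ $\frac{1}{4\lambda}g(\nabla f,\nabla r)$) into the identities of Lemma \ref{lemma02}, followed by the linear combination eliminating the Ricci term, is precisely that computation with the coefficients checked correctly.
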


Now we can prove
\begin{theorem}
Let $(M^n, g, \nabla f, \lambda,\mu)$ be a compact gradient hyperbolic Yamabe soliton with $\lambda\neq 0$ such that $\pounds_{\nabla f}\pounds_{\nabla f}g$ is trace-free and divergence-free.
If $n>2$, then the soliton is trivial.
\end{theorem}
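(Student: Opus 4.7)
The plan is to integrate the identity from Proposition \ref{p2}(i) over the compact manifold $M$ and extract a sign-definite conclusion about the Hessian of $f$. Since both $\Delta(\Vert\nabla f\Vert^2)$ and $\div(\nabla_{\nabla f}\nabla f)$ are divergences of globally defined quantities on a compact manifold (and $M$ has no boundary), their integrals vanish by the divergence theorem.

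Concretely, applying $\int_M$ to
$$\frac{1}{2}\Delta(\Vert \nabla f\Vert^2)=\frac{n-2}{n-1}\Vert \nabla \nabla f\Vert^2-\frac{1}{n-1}\div(\nabla_{\nabla f}\nabla f)$$
yields $0=\frac{n-2}{n-1}\int_M \Vert\nabla\nabla f\Vert^2$. The hypothesis $n>2$ makes the coefficient $\frac{n-2}{n-1}$ strictly positive, which forces $\Vert\nabla\nabla f\Vert\equiv 0$ on $M$, i.e.\ $\nabla\nabla f=0$.

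Finally, I would translate $\nabla\nabla f=0$ into the triviality statement: the Hessian of $f$ vanishes, and since $\pounds_{\nabla f}g = 2\,\Hess(f)$, this gives $\pounds_{\nabla f}g=0$, so $\nabla f$ is a Killing vector field and the soliton is trivial by the definition given in Section~1. The main (and essentially only) obstacle is the sign of the coefficient $\frac{n-2}{n-1}$, which is exactly why the hypothesis $n>2$ is needed; for $n=2$ the identity degenerates and one can draw no conclusion about $\nabla\nabla f$ from this argument.
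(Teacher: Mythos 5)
Your proposal is correct and follows exactly the paper's argument: integrate the identity of Proposition~\ref{p2}(i) over the compact manifold, note that the two divergence terms integrate to zero, and conclude $\int_M \Vert\nabla\nabla f\Vert^2=0$ since $\tfrac{n-2}{n-1}>0$ for $n>2$, whence $\nabla\nabla f=0$ and $\pounds_{\nabla f}g=2\Hess(f)=0$. Your final step spelling out why $\nabla\nabla f=0$ gives triviality is a detail the paper leaves implicit, but the route is the same.
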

\begin{proof}
By integrating the relation from Proposition \ref{p2} (i), we get
$$\int_M \Vert \nabla \nabla f\Vert^2=0,$$
hence the conclusion.
\end{proof}

\vspace{2mm} \noindent \footnotesize
\begin{minipage}[b]{10cm}
Adara M. Blaga \\
Department of Mathematics \\
Faculty of Mathematics and Computer Science \\
West University of Timi\c{s}oara, Romania \\
Email: adarablaga@yahoo.com
\end{minipage}


\begin{thebibliography}{1}
\bibitem{a}  Blaga, A.M.:
{On warped product gradient $\eta$-Ricci solitons},
Filomat 31(18), 5791--5801 (2017).
https://doi.org/10.2298/FIL1718791B

\bibitem{AC0} Blaga, A.M., \"{O}zg\"{u}r, C.:
{Killing and $2$-Killing vector fields on doubly warped products}, submitted.

\bibitem{Adara}  Blaga, A.M., \"{O}zg\"{u}r, C.:
{Some properties of hyperbolic Yamabe solitons}.
https://doi.org/10.48550/arXiv.2310.15814

\bibitem{hip2} Dai, W.-R., Kong, D.-X., Liu, K.:
Hyperbolic geometric flow (I): Short-time existence and nonlinear stability, Pure Appl. Math. Q. 6, 331--359 (2010).
{https://dx.doi.org/10.4310/PAMQ.2010.v6.n2.a3}

\bibitem{hi} Faraji, H., Azami, S., Fasihi-Ramandi, G.:
Three dimensional homogeneous hyperbolic Ricci solitons, {J. Nonlinear Math. Phys.} 30,  135--155 (2023).
{https://doi.org/10.1007/s44198-022-00075-4}

\bibitem{li} Kong, D., Liu, K.: Wave character of metrics and hyperbolic
geometric flow, {J. Math. Phys.} {48}(10), 103508, 14 pp (2007).
https://doi.org/10.1063/1.2795839

\bibitem{hip1} Shu, F.-W., Shen, Y.-G.: Geometric flows and black holes.
{https://doi.org/10.48550/arXiv.gr-qc/0610030}

\bibitem{Yano Boch} Yano, K., Bochner, S.:
{Curvature and Betti Numbers},
Princeton University Press, Princeton (1949).
\end{thebibliography}
\end{document}